\font\bbbld=msbm10 scaled\magstephalf
\newcommand{\bpartial}{\bar{\partial}}
\def \p{\partial}
\newcommand{\bfR}{\hbox{\bbbld R}}
\newtheorem{theorem}{Theorem}[section]
\newtheorem{lemma}[theorem]{Lemma}
\newtheorem{corollary}[theorem]{Corollary}
 \theoremstyle{definition}
\theoremstyle{remark}
\numberwithin{equation}{section}
\begin{document}
\setlength{\baselineskip}{1.2\baselineskip}

\title[The general $J$-flows]
{The general $J$-flows}
%\date{2015.07.08}
\thanks{Project Funded by China Postdoctoral Science Foundation}

\author{Wei Sun}

\address{Shanghai Center for Mathematical Sciences, Fudan University,
         Shanghai 200433, China}
\email{sunwei\_math@fudan.edu.cn}

\begin{abstract}
We study the general $J$-flows. We use Moser iteration to obtain the uniform estimate. 
\end{abstract}

\maketitle

\section{Introduction}
\label{gjf-int}
\setcounter{equation}{0}
\medskip

In \cite{Donaldson99a}, Donaldson first defined the $J$-flow in the setting of moment maps. Later in the study of Mabuchi energy, Chen~\cite{Chen00b} independently defined the $J$-flow as the gradient flow for the $J$-functional under the normalization of the $I$-functional.

Let $(M,\omega)$ be a closed K\"ahler manifold of complex dimension $n \geq 2$, and $\chi$ a smooth closed real $(1,1)$ form in $\Gamma^k_\omega$. Throughout this paper, $\Gamma^k_\omega$ is the set of all the real $(1, 1)$ forms whose eigenvalue set with respect to $\omega$ belong to $k$-positive cone in $\bfR^n$. 
We consider the general $J$-flow for $ n \geq k > l \geq 1$,
\begin{equation}
\label{gjf-int-j-flow-equation}
	\frac{\p u}{\p t} = c -  \frac{\chi^l_u \wedge \omega^{n - l}}{\chi^k_u \wedge \omega^{n - k}},
\end{equation}
where
\begin{equation}
	c = \frac{\int_M \chi^l \wedge \omega^{n - l}}{\int_M \chi^k \wedge \omega^{n - k}} \qquad \text{and} \qquad \chi_u = \chi + \sqrt{- 1} \p\bpartial u .
\end{equation}
Indeed, the results of this paper apply to more general forms
\begin{equation}
	\frac{\p u}{\p t} = c -  \frac{\sum^{k - 1}_{l = 0} b_l \chi^l_u \wedge \omega^{n - l}}{\chi^k_u \wedge \omega^{n - k}},\quad b_l \geq 0 \text{ and } \sum^{k - 1}_{l = 0} b_l > 0 .
\end{equation}

We recall the general $J$-functionals, which was actually defined by Fang, Lai and Ma~\cite{FLM11}. Let $\mathcal{H}$ be the space 
\begin{equation}
	\mathcal{H} := \{ u\in C^\infty(M) \;|\; \chi_u \in \Gamma^k_\omega\} .
\end{equation}
For any curve $v(s) \in \mathcal{H}$, we define the funtional $J_m$ by
\begin{equation}
\label{uniform-J-functional-defition-derivative}
	\frac{d J_m}{d s} = \int_M \frac{\partial v}{\partial s} \chi^m_v \wedge \omega^{n - m} 
\end{equation}
for any $0 \leq m \leq k$. %It is straightforward to verify that the functionals are independent of choice of the path.
The parabolic flow~\eqref{gjf-int-j-flow-equation} can thus be viewed as the negative gradient flow for the $J_l$-functional under the normalization $J_k (u) = 0$.

It is easy to see that the $J$-flow is a special case,
\begin{equation}
	\frac{\p u}{\p t} = c -  \frac{\chi^{n - 1}_u \wedge \omega}{\chi^n_u} .
\end{equation}
Chen~\cite{Chen04} proved the long time existence of the solution to the $J$-flow. 
Weinkove~\cite{Weinkove04}~\cite{Weinkove06} showed the convergence under a strong condition.
In~\cite{SW08}, Song and Weinkove put forward and established a necessary and sufficient condition for convergence, which is called the cone condition.
A numerical version of the cone condition, which is easier to check in concrete examples, was proposed by Lejmi and Sz\'ekelyhidi~\cite{LejmiSzekelyhidi13}. 
Later, Collins and Sz\'ekelyhidi~\cite{CollinsSzekelyhidi2014a} affirmed the numerical cone condition on toric manifolds.

In this paper, we prove the following theorem.
\begin{theorem}
\label{main-theorem}
Let $(M,\omega)$ be a closed K\"ahler manifolds of complex dimension $n \geq 2$ and $\chi$ a closed form in $\Gamma^k_\omega$. Suppose that there exists $\chi' \in [\chi] \cap \Gamma^k_\omega$ satisfying the cone condition
\begin{equation}
\label{cone-condition-1}
	c k \chi'^{k - 1} \wedge \omega^{n - k} > l \chi'^{l - 1} \wedge \omega^{n - l} .
\end{equation}
Suppose that $u$ is the solution to the general $J$-flow~\eqref{gjf-int-j-flow-equation} on maximal time $[0,T)$. Then here exists a uniform constant $C>0$ such that for any $t \in [0,T)$
\begin{equation}
	\sup_M u (x,t) - \inf_M u (x,t) < C.
\end{equation}
\end{theorem}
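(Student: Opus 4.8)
\emph{Strategy and Step 1 (reduction to an elliptic equation).} Write the flow as $\partial u/\partial t = c - G(\chi_u)$, where $G$ is, up to fixed combinatorial constants, the ratio $\sigma_l/\sigma_k$ of elementary symmetric functions of the eigenvalues of $\chi_u$ with respect to $\omega$. On $\Gamma^k_\omega$ the ratio $\sigma_l/\sigma_k$ is strictly decreasing in each eigenvalue (Newton--Maclaurin inequalities), so the flow~\eqref{gjf-int-j-flow-equation} is uniformly parabolic; differentiating it in $t$ shows that $\dot u:=\partial u/\partial t$ solves a linear parabolic equation $\partial_t\dot u=(-G^{i\bar j})\dot u_{i\bar j}$ with no zeroth-order term and positive-definite leading coefficient. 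By the parabolic maximum principle on the closed manifold $M$, $\max_M\dot u(\cdot,t)$ is non-increasing and $\min_M\dot u(\cdot,t)$ is non-decreasing, hence $\|\partial u/\partial t\|_{L^\infty(M\times[0,T))}\le\|\partial u/\partial t\|_{L^\infty(M\times\{0\})}=:C_0$, a constant depending only on the given smooth initial datum. Thus at each fixed $t$ the function $u(\cdot,t)$ solves
$$\chi_u^l\wedge\omega^{n-l}=f\,\chi_u^k\wedge\omega^{n-k},\qquad \chi_u\in\Gamma^k_\omega,\qquad 0<f\le c+C_0,$$
with $\int_M\chi_u^m\wedge\omega^{n-m}$ equal to its cohomological value for $m=l,k$; from here on the flow is forgotten and one proves an oscillation bound for this elliptic equation, uniform over the admissible range of $f$. (Maclaurin's inequality applied to $\sigma_l\le\text{const}\cdot\sigma_k$ also gives $\sigma_k(\chi_u)\ge\delta_0>0$ pointwise, i.e. $\chi_u^k\wedge\omega^{n-k}\ge\delta_0\,\omega^n$, a non-degeneracy used below.)

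\emph{Step 2 (normalization, an $L^1$ bound, the coercive form).} Choose $\psi$ with $\chi'=\chi+\sqrt{-1}\partial\bpartial\psi\in[\chi]\cap\Gamma^k_\omega$ satisfying~\eqref{cone-condition-1}, set $w=u-\psi$ (so $\chi_u=\chi'+\sqrt{-1}\partial\bpartial w$), normalize $\sup_M w=0$, and put $\varphi=-w\ge0$; it suffices to bound $\sup_M\varphi$, since $\sup_M u-\inf_M u\le\sup_M\psi-\inf_M\psi+\sup_M\varphi$. As $\chi_u\in\Gamma^1_\omega$ one has $\Delta_\omega w=\tr_\omega\chi_u-\tr_\omega\chi'\ge-\tr_\omega\chi'\ge-C$; inserting this into the Green representation on $(M,\omega)$ and using that the Green function is bounded below gives $\sup_M w-\frac1V\int_M w\,\omega^n\le C$, hence $\int_M\varphi\,\omega^n\le C$. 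Finally, condition~\eqref{cone-condition-1} says precisely that the $(n-1,n-1)$-form
$$\Theta:=c\,k\,\chi'^{k-1}\wedge\omega^{n-k}-l\,\chi'^{l-1}\wedge\omega^{n-l}$$
is strictly positive; by compactness $\Theta\ge\epsilon_0\,\omega^{n-1}$ for some $\epsilon_0>0$, so $\sqrt{-1}\partial\varphi\wedge\bpartial\varphi\wedge\Theta\ge\frac{\epsilon_0}{n}|\nabla\varphi|^2_\omega\,\omega^n$.

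\emph{Step 3 (Moser iteration).} For $p\ge0$ integrate $\varphi^{p+1}$ against the identity $c\,\chi_u^k\wedge\omega^{n-k}-\chi_u^l\wedge\omega^{n-l}=(c-f)\,\chi_u^k\wedge\omega^{n-k}$, expanding every power through $\sqrt{-1}\partial\bpartial\varphi=\chi'-\chi_u$ and integrating by parts. The part linear in $\sqrt{-1}\partial\bpartial\varphi$ is exactly $(p+1)\int_M\varphi^p\,\sqrt{-1}\partial\varphi\wedge\bpartial\varphi\wedge\Theta\ge\frac{\epsilon_0(p+1)}{n}\int_M\varphi^p|\nabla\varphi|^2_\omega\,\omega^n$, the coercive term that drives the iteration; the part of order zero in $\sqrt{-1}\partial\bpartial\varphi$, together with the right-hand side $\int_M\varphi^{p+1}(c-f)\chi_u^k\wedge\omega^{n-k}$ (recall $|c-f|\le C_0$), is bounded by $C\int_M\varphi^{p+1}\omega^n$ plus mixed terms of the form $\int_M\varphi^p\,\sqrt{-1}\partial\varphi\wedge\bpartial\varphi\wedge\chi'^a\wedge\chi_u^b\wedge\omega^{n-1-a-b}$ with $b<k$; and the higher-order parts of the expansion, after one integration by parts and re-expanding $(\sqrt{-1}\partial\bpartial\varphi)^j=(\chi'-\chi_u)^j$, have the same shape. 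Peeling off the bounded factor $\chi'\le\Lambda\omega$ lowers the power of $\chi_u$, and the residual terms carrying $\chi_u$ are controlled using $\chi_u,\chi'\in\Gamma^k_\omega$ (G\aa rding/Newton--Maclaurin positivity of mixed wedge products) together with the fixed mass and the non-degeneracy of $\chi_u^k\wedge\omega^{n-k}$, and absorbed into one half of the coercive term plus $C(1+p)^{A}\int_M\varphi^{p+2}\omega^n$ for a fixed exponent $A$. The outcome is an inequality
$$\int_M\big|\nabla(\varphi^{(p+2)/2})\big|^2_\omega\,\omega^n\le C(1+p)^{A}\int_M\varphi^{p+1}(1+\varphi)\,\omega^n .$$
With the Sobolev inequality on $(M,\omega)$ (exponent $\beta=n/(n-1)$) this gives $\|\varphi\|_{L^{\beta(p+2)}}^{p+2}\le(C(1+p))^{A}\big(\|\varphi\|_{L^{p+2}}^{p+2}+\|\varphi\|_{L^{p+1}}^{p+1}\big)$; the $L^1$ bound of Step 2 is upgraded to $L^2$ by one interpolation (Young) step, and iterating along $p_m+2=2\beta^m$ --- the series $\sum_m(1+p_m)^{-1}\log\!\big(C(1+p_m)\big)$ converging --- yields $\sup_M\varphi\le C\|\varphi\|_{L^2}\le C$, with $C$ depending only on $(M,\omega,\chi,\chi',k,l)$ and $C_0$, hence uniform in $t$.

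\emph{The main obstacle.} Step 1 and the $L^1$ bound are routine; the whole difficulty lies in the control of the mixed higher-order terms in Step 3. Since $\chi_u$ is not bounded above --- only the $\sigma_m(\chi_u)$ have fixed integrals and, through the equation, a positive lower bound --- these terms cannot be dominated pointwise, and one must exploit the $\Gamma^k_\omega$-structure together with the equation to absorb them while keeping the $p$-dependence sharp enough for the iteration to converge. Carrying this out (presumably by an induction on the power of $\chi_u$, using G\aa rding's inequality and the mass/non-degeneracy bounds) is the technical heart of the proof.
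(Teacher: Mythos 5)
There is a genuine gap, and you have located it yourself: the ``technical heart'' of your Step 3 --- controlling the terms of order $\ge 2$ in $\sqrt{-1}\p\bpartial\varphi$ that arise when $\chi_u^k$ and $\chi_u^l$ are expanded about $\chi'$ --- is exactly what is not done, and your proposed route (peel off $\chi'\le\Lambda\omega$, absorb residual $\chi_u$-terms using mass and non-degeneracy) is not justified and is where a direct Moser iteration on $\varphi=-(u-\psi)$ breaks down, since $\chi_u$ has no pointwise upper bound. The paper avoids these terms entirely by a structural device you are missing: it writes the difference of the operator at $u$ and at the reference potential $v$ as $\int_0^1\frac{d}{ds}(\cdots)\,ds$ along the linear path $\chi_s=\chi_{su+(1-s)v}$, so that after one integration by parts the only gradient term is
\begin{equation*}
p\int_0^1\!ds\int_M\varphi^{p-1}\sqrt{-1}\,\p(u-v)\wedge\bpartial(u-v)\wedge\Big[k(c-\epsilon-su_t+s\epsilon)\chi_s^{k-1}\wedge\omega^{n-k}-l\chi_s^{l-1}\wedge\omega^{n-l}\Big],
\end{equation*}
with \emph{no} higher-order remainder; the bracket is then bounded below by $(1-s)^k\epsilon k\,\chi_v^{k-1}\wedge\omega^{n-k}$ using the cone condition at $v$ together with the concavity of $-S_{l-1}/S_{k-1}$ and of $S_{k-1}^{1/(k-1)}$ along the path. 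This is the key lemma your expansion-and-absorption scheme is trying to substitute for, and without it the iteration inequality you write down is unproven.

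A second, related divergence: the paper does not run the iteration on the globally normalized $\varphi$ to get $\sup\varphi\le C\|\varphi\|_{L^2}$. It reduces (via $J_k(u)=0$, G\aa rding's inequality, and the Green's function bound --- your Steps 1--2 are essentially consistent with this part) to bounding $\inf_M(u-v)$ from below, and then iterates the \emph{truncated} function $\varphi=(u-v-\epsilon t-L)^-$ on a unit time interval $[t_0-1,t_0]$, chosen so that $\varphi(\cdot,t_0-1)=0$ and $\sup\varphi=\epsilon$ is known exactly. The parabolic Moser iteration then gives $\ln\epsilon=\ln\|\varphi\|_{L^\infty}\le C+\ln\|\varphi\|_{L^\beta}$, i.e.\ a \emph{lower} bound on the space--time measure of $\{\varphi>0\}$; combined with the uniform $L^1$ bound on $u-\sup_Mu$ this forces $|\inf_{M\times[0,t_0]}(u-v)|\le C_0$ (the Blocki--Sz\'ekelyhidi argument). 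Your proposal runs the iteration in the conventional direction and never uses this measure-theoretic step, so even granting your Step 3 inequality, the logical architecture differs; but the decisive defect remains the unproven absorption of the mixed higher-order terms.
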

We shall apply the Moser iteration approach in \cite{Sun2014e} and an idea from Blocki~\cite{Blocki2005a} and Sz\'ekelyhidi~\cite{Szekelyhidi2014b}. In order to obtain a time-independent uniform estimate, we apply the Moser iteration locally in time $t$. Since all other arguments are the same as those in \cite{Sun2015ph}, we immediately have a corollary. In \cite{Sun2015ph}, we applied the ABP estimate to obtain the uniform estimate.
\begin{corollary}
\label{main-corollary}
%Let $(M,\omega)$ be a closed K\"ahler manifolds of complex dimension $n \geq 2$ and $\chi$ a closed form in $\Gamma^k_\omega$. Suppose that there exists $\chi' \in [\chi] \cap \Gamma^k_\omega$ satisfying the cone condition
%\begin{equation}
%\label{cone-condition-1}
%	c k \chi'^{k - 1} \wedge \omega^{n - k} > l \chi'^{l - 1} \wedge \omega^{n - l} .
%\end{equation}
Under the assumption of Theorem~\ref{main-theorem}, there exists a long time solution $u$ to the general $J$-flow~\eqref{gjf-int-j-flow-equation}. 
Moreover, the solution $u$ converges in $C^\infty$ to $u_\infty$ with $\chi_{u_\infty} \in \Gamma^k_\omega$ satisfying
\begin{equation}
	\chi^l_{u_\infty} \wedge \omega^{n - l} = c \chi^k_{u_\infty} \wedge \omega^{n - k} .
\end{equation}
\end{corollary}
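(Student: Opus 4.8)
The plan is to deduce the corollary from the uniform oscillation bound of Theorem~\ref{main-theorem} together with the standard parabolic machinery for fully nonlinear flows of Hessian type, exactly as carried out in \cite{Sun2015ph}; the only genuinely new ingredient is the time-independent control of $\sup_M u - \inf_M u$ supplied by the theorem, and the task here is to assemble the remaining, by-now-routine estimates. Writing the flow \eqref{gjf-int-j-flow-equation} as $\partial_t u = c - F(\chi_u)$ with $F(\chi_u) = (\chi^l_u \wedge \omega^{n-l})/(\chi^k_u \wedge \omega^{n-k})$, the operator $F$ is, after the usual logarithmic reformulation, a concave and (on compact subcones of $\Gamma^k_\omega$) uniformly elliptic function of the complex Hessian of $u$. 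Standard parabolic theory then yields a smooth solution on a maximal interval $[0,T)$, which is precisely the solution hypothesized in Theorem~\ref{main-theorem}.

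The heart of the argument is the uniform-in-$t$ a priori estimate. Normalizing so that $\int_M u\, \omega^n = 0$, the oscillation bound of Theorem~\ref{main-theorem} upgrades to a $C^0$ bound uniform on $[0,T)$. The crucial step is then the second-order estimate: one must show that $\chi_u$ stays inside a fixed compact subcone of $\Gamma^k_\omega$, with its eigenvalues relative to $\omega$ bounded above and the form uniformly $k$-positive, with constants independent of $t$. I would obtain this by a maximum-principle argument applied to a test quantity built from $\tr_\omega \chi_u$ and the $C^0$ bound, in the spirit of Blocki~\cite{Blocki2005a} and Sz\'ekelyhidi~\cite{Szekelyhidi2014b}, with the cone condition \eqref{cone-condition-1} providing the structural positivity needed to absorb the bad terms. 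Once uniform ellipticity and uniform $C^2$ control are in hand, the parabolic Evans--Krylov theorem gives a uniform $C^{2,\alpha}$ estimate, and differentiating the equation and bootstrapping through parabolic Schauder theory produces estimates that are uniform in space to all orders and bounded in time.

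With these estimates, long-time existence follows by the usual extension argument: were $T<\infty$, the uniform $C^\infty$ bounds would let one pass to a smooth limit at time $T$ and restart the flow, contradicting maximality, so $T=\infty$. For convergence I would exploit the variational structure of \eqref{uniform-J-functional-defition-derivative}: along the flow the normalized $J_l$-functional (subject to $J_k(u)=0$) is monotone and bounded below, so $\int_0^\infty \int_M (\partial_t u)^2\, \chi^k_u\wedge\omega^{n-k}\, dt<\infty$ and $\partial_t u\to 0$ along some sequence of times; by Arzel\`a--Ascoli and the uniform $C^\infty$ estimates a subsequence of $u(\cdot,t_j)$ converges in $C^\infty$ to a limit $u_\infty$ with $\chi_{u_\infty}\in\Gamma^k_\omega$ satisfying $\chi^l_{u_\infty}\wedge\omega^{n-l}=c\,\chi^k_{u_\infty}\wedge\omega^{n-k}$. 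The main obstacle throughout is the uniform second-order estimate, and it is exactly here that the cone condition is indispensable: keeping $\chi_u$ in a compact subcone of $\Gamma^k_\omega$ for all time is what forces the uniform ellipticity on which every later step rests. A secondary point is to promote subsequential convergence to convergence of the entire flow, which I would settle either through the uniqueness (up to the normalization) of the solution to the limiting elliptic equation or via an exponential-decay estimate for $\partial_t u$, both available from \cite{Sun2015ph}.
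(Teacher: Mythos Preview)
Your proposal is correct and matches the paper's approach: the paper does not give a separate proof of the corollary but states that ``all other arguments are the same as those in \cite{Sun2015ph},'' so the corollary follows immediately once Theorem~\ref{main-theorem} supplies the time-independent oscillation bound. Your outline simply spells out what that deferred machinery consists of (second-order estimate via the cone condition, Evans--Krylov, Schauder, long-time existence, and convergence via the monotone functional), which is precisely the content of \cite{Sun2015ph}; the only minor remark is that the paper's built-in normalization is $J_k(u)=0$ (preserved by the flow, cf.\ \eqref{uniform-theorem-kahler-decreasing-J-functional} and Lemma~\ref{uniform-lemma-1}) rather than $\int_M u\,\omega^n=0$, but this does not affect the argument.
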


\bigskip

%\section{Preliminary}
%\label{preliminary}
%\setcounter{equation}{0}
%\medskip
%
%In short, there is a $C^2$ function $v$ such that $\chi_v \in \Gamma^k$ and
%\begin{equation}
%	k \chi^{k - 1}_v \wedge \omega^{n - k} > \psi l \chi^{l - 1}_v \wedge \omega^{n - l} .
%\end{equation}
%
%
%
%\bigskip

\section{The uniform estimate}
\label{uniform}
\setcounter{equation}{0}
\medskip

As in \cite{SW08}~\cite{FLM11}, the cone condition \eqref{cone-condition-1} is necessary and sufficient. In other words, the cone condition means that there is a $C^2$ function $v$ such that $\chi_v \in \Gamma^k_\omega$ and
\begin{equation}
\label{cone-condition-2}
	c k \chi^{k - 1}_v \wedge \omega^{n - k} >  l \chi^{l - 1}_v \wedge \omega^{n - l} .
\end{equation}
We may assume that there is $ c > \epsilon > 0$ such that
\begin{equation}
	(c - 2 \epsilon) k \chi^{k - 1}_v \wedge \omega^{n - k} >  l \chi^{l - 1}_v \wedge \omega^{n - l} .
\end{equation}
Without loss of generality, we may also assume that $\sup_M v = - 2 \epsilon$.

%%%%%%%%%%%%%%%%%%%%%%%%%%%%%%%%%%%
% J-functional

%Then we have a formula for $J_k$ of function $u$,
%\begin{equation}
%\label{uniform-J-functional-defition-formula}
%J_k (u) = \int^1_0 \int_M \frac{\partial v}{\partial s} \chi^k_v \wedge \omega^{n - k} ds,
%\end{equation}
%where $v(s)$ is an arbitrary path in $\mathcal{H}$ connecting $0$ and $u$.   
%We can restrict the integration to the straight line $v(s) = s u$ to calculate $J_k(u)$  for any $u \in \mathcal{H}$.
%\begin{equation}
%\label{uniform-J-functional-calculation-line}
%\begin{aligned}
%	J_k (u) &= \int^1_0 \int_M u \chi^k_{s u}\wedge \omega^{n - k} ds \\
%	&= \int^1_0 \int_M u \left(s \chi_u + (1 - s)\chi\right)^k \wedge \omega^{n - k} ds \\
%	&= \int^1_0 \int_M u \sum^k_{i = 0} C^i_k s^i (1 - s)^{k - i} \chi^i_u \wedge \chi^{k - i} \wedge \omega^{n - k} ds \\
%	&= \sum^k_{i = 0} C^i_k \int^1_0 s^i (1 - s)^{k - i} ds \int_M u \chi^i_u \wedge \chi^{k - i} \omega^{n - k} \\
%	&= \frac{1}{k + 1} \sum^k_{i = 0}  \int_M u \chi^i_u \wedge \chi^{k - i} \wedge \omega^{n - k} .
%\end{aligned}
%\end{equation}
Along the solution flow $u(x,t)$ to equation~\eqref{gjf-int-j-flow-equation},
\begin{equation}
\label{uniform-theorem-kahler-decreasing-J-functional}
\begin{aligned}
	\frac{d}{dt} J_k (u) &= \int_M \frac{\partial u}{\partial t} \chi^k_u \wedge \omega^{n - l} \\
%	&= \int_M \left(\frac{1}{c} - \frac{\psi}{c} \frac{\chi^l_u \wedge \omega^{n - l}}{\chi^k_u \wedge \omega^{n - k}}\right) \chi^k_u \wedge \omega^{n - k} \\
	&= c \int_M \chi^k_u \wedge \omega^{n - k}  - \int_M  \chi^l_u \wedge \omega^{n - l} = 0 .
\end{aligned}
\end{equation}
%The second inequality follows from Jensen's inequality. Thus, $J_k (u)$ is decreasing and nonpositive along the solution flow.
%
%Now we consider an arbitrary function flow $u(x,t)$ starting from $0$. Computing $J_k$ on the flow, that is $v(x,s) = u(x,sT)$, it follows that
%\begin{equation}
%\label{uniform-J-functional-calculation-flow}
%\begin{aligned}
%	J_k(u(T)) = \int^T_0 \frac{d J_k}{d t} dt .
%\end{aligned}
%\end{equation}

\begin{lemma}
\label{uniform-lemma-1}
At any time $t $,
\begin{equation}
\label{uniform-lemma-1-inequality}
	0 \leq \sup_M u \leq - C_1 \inf_M u + C_2 \;\;\;\text{ and }\;\;\; \inf_M  u(x,t) \leq 0.
\end{equation}
\end{lemma}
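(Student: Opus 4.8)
The plan is to prove the three inequalities separately, using only that $\chi_u \in \Gamma^k_\omega$ and that $J_k(u) = 0$ along the flow (the latter by \eqref{uniform-theorem-kahler-decreasing-J-functional} together with the normalization $J_k = 0$). Write $V = \int_M \omega^n$, $V_k = \int_M \chi^k \wedge \omega^{n - k}$, and $\delta_0 := \inf_M \bigl( \chi^k \wedge \omega^{n - k} / \omega^n \bigr)$; the constant $\delta_0$ is positive since $\chi \in \Gamma^k_\omega$ forces $\sigma_k$ of the eigenvalues of $\chi$ to be bounded away from $0$ on the compact manifold $M$. Integrating the defining relation \eqref{uniform-J-functional-defition-derivative} along the path $s \mapsto s u$, $s \in [0, 1]$, I obtain
\begin{equation}
	0 = J_k(u) = \int_0^1 \Bigl( \int_M u\, \chi_{su}^k \wedge \omega^{n - k} \Bigr) ds ,
\end{equation}
where each $\chi_{su} = (1 - s)\chi + s\chi_u$ lies in $\Gamma^k_\omega$ by convexity of the $k$-positive cone, so each slice $\chi_{su}^k \wedge \omega^{n - k}$ is a nonnegative measure on $M$ of total mass $V_k$ (the mass being a cohomological invariant).

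The two sign statements are then immediate: for each $s$ we have $V_k \inf_M u \le \int_M u\, \chi_{su}^k \wedge \omega^{n - k} \le V_k \sup_M u$, and integrating in $s$, since the outer integral vanishes, gives $V_k \inf_M u \le 0 \le V_k \sup_M u$.

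For the bound $\sup_M u \le - C_1 \inf_M u + C_2$ I would combine two ingredients. (i) A uniform lower bound on the slices: by concavity of $\sigma_k^{1/k}$ on the G\aa rding cone, applied along the affine segment joining the endomorphisms $\omega^{-1}\chi$ and $\omega^{-1}\chi_u$ (which both have eigenvalues in $\Gamma_k$), one gets $\chi_{su}^k \wedge \omega^{n - k} \ge (1 - s)^k \delta_0\, \omega^n$ as measures. Inserting this into the identity above, after replacing $u$ by $u - \inf_M u \ge 0$, and using the elementary fact $\int_0^1 (1 - s)^k\, ds = \tfrac{1}{k + 1}$, yields
\begin{equation}
	\tfrac{1}{V} \int_M u\, \omega^n \le \Bigl( \tfrac{(k + 1) V_k}{\delta_0 V} - 1 \Bigr) \bigl( - \inf_M u \bigr) ,
\end{equation}
in which the coefficient is positive because $V_k \ge \delta_0 V$. (ii) A bound of $\sup_M u$ by the $\omega$-average from above: $\Gamma^k_\omega \subset \Gamma^1_\omega$ forces $\tr_\omega \chi_u > 0$, hence $\tr_\omega(\sqrt{- 1}\p\bpartial u) = \tr_\omega \chi_u - \tr_\omega \chi \ge - \sup_M \tr_\omega \chi$, so $\Delta_\omega u$ is bounded below by a fixed constant; the standard Green's function estimate on $(M, \omega)$ then gives $\sup_M u \le \tfrac{1}{V} \int_M u\, \omega^n + C_3$ with $C_3 = C_3(M, \omega, \chi)$. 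Combining (i) and (ii) gives the assertion with $C_1 = \tfrac{(k + 1) V_k}{\delta_0 V} - 1$ and $C_2 = C_3$.

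The cohomological invariance of $V_k$, the positivity of $\delta_0$, and the Green's function estimate in (ii) are all standard and I would simply cite them. The step that requires genuine care is the uniform-in-$s$ lower bound of (i): one cannot extract a uniform constant merely from continuity of $s \mapsto \chi_{su}^k \wedge \omega^{n - k}$ near $s = 0$, because $\chi_u$ is not yet known to be bounded; it is exactly the concavity of $\sigma_k^{1/k}$, together with the nonnegativity $\chi_{su}^k \wedge \omega^{n - k} \ge 0$ that follows from $\chi_{su} \in \Gamma^k_\omega$, that converts the identity $J_k(u) = 0$ into an effective bound on $\tfrac{1}{V}\int_M u\, \omega^n$. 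I expect that to be the crux; the remaining steps are routine bookkeeping.
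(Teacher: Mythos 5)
Your proof is correct and follows essentially the same route as the paper: both exploit the identity $J_k(u)=0$ expanded along the linear path $s\mapsto su$ (your $s$-integral is exactly the paper's $\frac{1}{k+1}\sum_{i=0}^k\int_M u\,\chi_u^i\wedge\chi^{k-i}\wedge\omega^{n-k}$ after binomial expansion), combined with Yau's Green's-function bound $\sup_M u\le \frac{1}{V}\int_M u\,\omega^n+C_3$. The only cosmetic difference is that you lower-bound the slices via concavity of $\sigma_k^{1/k}$, whereas the paper invokes G\aa rding's inequality for the nonnegativity of the mixed terms $\chi_u^i\wedge\chi^{k-i}\wedge\omega^{n-k}$; the resulting constants coincide.
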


\begin{proof}
Choosing the path $v (s) = s u$ and noting that $J_k (u) = 0$,
\begin{equation}
\label{uniform-lemma-1-J-line}
	\frac{1}{k + 1} \sum^k_{i = 0} \int_M  u \chi^i_{ u} \wedge \chi^{k - i} \wedge \omega^{n - k} = 0.
\end{equation}
The first and third inequalities in \eqref{uniform-lemma-1-inequality} then follow from \eqref{uniform-lemma-1-J-line} and G\r{a}rding's inequality.

Rewriting \eqref{uniform-lemma-1-J-line},
\begin{equation}
	  \int_M  u  \chi^k  \wedge \omega^{n - k} = - \sum^k_{i = 1}  \int_M  u \chi^i_{ u} \wedge \chi^{k - i}  \wedge \omega^{n - k}.
\end{equation}
Let $C$ be a positive constant such that
\begin{equation}
	\omega^n \leq C \chi^k \wedge \omega^{n - l} .
\end{equation}
Then as in \cite{Sun2013p}, 
\begin{equation}
\label{uniform-lemma-1-inequality-1}
\begin{aligned}
	\int_M  u \omega^n &= \int_M \left( u - \inf_M  u\right) \omega^n + \int_M \inf_M  u \omega^n \\
	&\leq C \int_M \left( u - \inf_M  u\right) \chi^k \wedge \omega^{n - k} + \inf_M  u \int_M \omega^n \\
%	&= C \int_M  u \chi^k \wedge \omega^{n - k} + \inf_M  u \left(\int_M \omega^n - C_1 \int_M \chi^k \wedge \omega^{n - k} \right) \\
%	&= - C \sum^k_{i = 1}  \int_M  u \chi^i_{ u} \wedge \chi^{k - i}  \wedge \omega^{n - k} \\
%	&\hspace{5em}+ \inf_M  u \left(\int_M \omega^n - C_1 \int_M \chi^k \wedge\omega^{n - k} \right) \\
%	&= - C \sum^k_{i = 1}  \int_M \left( u - \inf_M  u\right) \chi^i_{ u} \wedge \chi^{k - i}  \wedge \omega^{n - k} \\
%	&\hspace{5em}+ \inf_M  u \left(\int_M \omega^n -  (k + 1)C \int_M  \chi^k  \wedge \omega^{n - k} \right)  \\
	&\leq \inf_M  u \left(\int_M \omega^n -  (k + 1) C \int_M  \chi^{n - \alpha }  \wedge \omega^\alpha\right) .
\end{aligned}
\end{equation}
The second inequality in \eqref{uniform-lemma-1-inequality} then follows from \eqref{uniform-lemma-1-inequality-1} (see Yau~\cite{Yau78}).

\end{proof}

\begin{proof}[Proof of Theorem~\ref{main-theorem}]
According to Lemma~\ref{uniform-lemma-1}, it suffices to prove a lower bound for $\inf_M (u - v) (x,t)$. We claim that
\begin{equation}
\label{uniform-theorem-lower-bound}
	\inf_M (u - v) (x,t) >  \inf_M u_t (x,0)  - C_0 ,
\end{equation}
where $C_0$ is to be specified later.

Differentiating the general $J$-flow~\eqref{gjf-int-j-flow-equation} with respect to $t$,
\begin{equation}
	\frac{\p u_t}{\p t} = \frac{\chi^l_u \wedge \omega^{n - l}}{\chi^k_u \wedge \omega^{n - k}} \Bigg(\frac{k \sqrt{- 1} \p\bpartial u_t \wedge \chi^{k - 1}_u \wedge \omega^{n - k}}{\chi^{k }_u \wedge \omega^{n - k}} - \frac{l \sqrt{- 1} \p\bpartial u_t \wedge \chi^{l - 1}_u \wedge \omega^{n - l}}{\chi^{l }_u \wedge \omega^{n - l}}\Bigg),
\end{equation}
which is also parabolic. Applying the maximum principle, $u_t$ reaches the extremal values at $t = 0$. So when $t \leq 1$, we have 
\begin{equation}
	\inf_M (u - v) (x,t) \geq \inf_M u(x,t) + 2 \epsilon \geq t \inf_M u_t (x,0) + 2 \epsilon .
\end{equation}
Therefore if \eqref{uniform-theorem-lower-bound} fails, there must be time $t_0 > 1$ such that
\begin{equation}
	\inf_M (u - v) (x,t_0) = \inf_{M \times [0,t_0]} (u - v) (x,t) = \inf_M u_t (x,0)  - C_0 \leq 0.
\end{equation}

For $p \geq 1 $, we consider the integral
\begin{equation}
\label{uniform-def-I}
\begin{aligned}
	I = \int_M \varphi^p \Big[\Big((c - u_t) \chi^k_u \wedge \omega^{n - k} - (c - \epsilon) \chi^k_v \wedge \omega^{n - k} \Big) - \Big( \chi^l_u \wedge \omega^{n - l} - \chi^l_v \wedge \omega^{n - l} \Big)\Big].
\end{aligned}
\end{equation}
It is easy to see that form some constant $C > 0$
\begin{equation}
	I \leq C \int_M \varphi^p \omega^n .
\end{equation}

\begin{equation}
	I = \int_M \varphi^p \Big[(c - \epsilon - s u_t + s\epsilon ) \chi^k_{s u + (1 - s) v} \wedge \omega^{n - k} - \chi^l_{s u + (1 - s) v} \wedge \omega^{n - l} \Big] \Bigg|^1_{s = 0} 
\end{equation}
For simplicity, we denote 
$
	\chi_s = \chi_{s u + (1 - s) v}.
$
Thus
\begin{equation}
\begin{aligned}
	I 
	&= \int^1_0 ds \int_M \varphi^p \sqrt{- 1} \p\bpartial (u - v) \wedge  \Big[k (c - \epsilon - s u_t + s\epsilon ) \chi^{k - 1}_s \wedge \omega^{n - k} - l \chi^{l - 1}_s \wedge \omega^{n - l} \Big]\\
	&\qquad - \int^1_0 ds \int_M \varphi^p (u_t - \epsilon) \chi^k_s \wedge \omega^{n - k} \\
	&= - p \int^1_0 ds \int_M \varphi^{p - 1} \sqrt{- 1} \p \varphi \wedge \bpartial (u - v) \\
	&\hspace{6em} \wedge  \Big[k (c - \epsilon - s u_t + s\epsilon ) \chi^{k - 1}_s \wedge \omega^{n - k}- l \chi^{l - 1}_s \wedge \omega^{n - l} \Big] \\
	&\qquad - \int^1_0 ds \int_M k s \varphi^p \sqrt{- 1} \bpartial (u - v) \wedge \p u_t \wedge \chi^{k - 1}_s \wedge \omega^{n - k} \\
	&\qquad - \int^1_0 ds \int_M \varphi^p (u_t -\epsilon) \chi^k_s \wedge \omega^{n - k} .
\end{aligned}
\end{equation}
We define
\begin{equation}
	\varphi = (u - v - \epsilon t - L )^- \geq 0,
\end{equation}
where $L$ to be specified later, and hence
\begin{equation}
\begin{aligned}
	I 
%	&= p \int^1_0 ds \int_M \varphi^{p - 1} \sqrt{- 1} \p (u - v) \wedge \bpartial (u - v) \\
%	&\hspace{6em} \wedge \Big[k (c - \epsilon - s u_t + s\epsilon ) \chi^{k - 1}_s \wedge \omega^{n - k}- l \chi^{l - 1}_s \wedge \omega^{n - l} \Big] \\
%	&\qquad - \int^1_0 ds \int_M k s \varphi^p \sqrt{- 1} \bpartial (u - v) \wedge \p u_t \wedge \chi^{k - 1}_s \wedge \omega^{n - k} \\
%	&\qquad - \int^1_0 ds \int_M \varphi^p (u_t -\epsilon) \chi^k_s \wedge \omega^{n - k} \\
	 &= p \int^1_0 ds \int_M \varphi^{p - 1} \sqrt{- 1} \p (u - v) \wedge \bpartial (u - v) \\
	&\hspace{6em} \wedge \Big[k (c - \epsilon - s u_t + s\epsilon ) \chi^{k - 1}_s \wedge \omega^{n - k}- l \chi^{l - 1}_s \wedge \omega^{n - l} \Big] \\
	&\qquad - \int^1_0 ds \int_M k s \varphi^p \sqrt{- 1} \bpartial (u - v) \wedge \p u_t \wedge \chi^{k - 1}_s \wedge \omega^{n - k} \\
	&\qquad + \frac{1}{p + 1} \int^1_0 ds \int_M \p_t (\varphi^{p + 1} ) \chi^k_s \wedge \omega^{n - k} .
\end{aligned}
\end{equation}
Using integration by parts again, we know that almost everywhere over time $t$,
\begin{equation}
\begin{aligned}
	&\quad - \int^1_0 ds \int_M k s \varphi^p \sqrt{- 1} \bpartial (u - v) \wedge \p u_t \wedge \chi^{k - 1}_s \wedge \omega^{n - k} \\
%	&= \frac{1}{p + 1} \int^1_0 ds \int_M k s \sqrt{- 1} \bpartial (\varphi^{p + 1}) \wedge \p u_t \wedge \chi^{k - 1}_s \wedge \omega^{n - k} \\
	&= \frac{1}{p + 1} \int^1_0 ds \int_M k s \varphi^{p + 1} \sqrt{- 1} \p\bpartial u_t \wedge \chi^{k - 1}_s \wedge \omega^{n - k} \\
	&= \frac{1}{p + 1} \int^1_0 ds \int_M \varphi^{p + 1} \p_t (\chi^k_s \wedge \omega^{n - k}) ,
\end{aligned}
\end{equation}
and thus
\begin{equation}
\begin{aligned}
	I 
	&= p \int^1_0 ds \int_M \varphi^{p - 1} \sqrt{- 1} \p (u - v) \wedge \bpartial (u - v) \\
	&\hspace{6em} \wedge \Big[k (c - \epsilon - s u_t + s\epsilon ) \chi^{k - 1}_s \wedge \omega^{n - k}- l \chi^{l - 1}_s \wedge \omega^{n - l} \Big] \\
	&\qquad + \frac{1}{p + 1} \frac{d}{d t}\int^1_0 ds \int_M \varphi^{p + 1}  \chi^k_s \wedge \omega^{n - k} .
\end{aligned}
\end{equation}

Since $- S_{l - 1;i} / S_{k - 1;i}$ and $S^{\frac{1}{k - 1}}_{k - 1;i}$ are concave, we have
\begin{equation}
\begin{aligned}
	k (c - \epsilon - s u_t + s \epsilon) \chi^{k - 1}_s \wedge \omega^{n - k} - l \chi^{l - 1}_s \wedge \omega^{n - l} 
	&\geq (1 - s) \epsilon k \chi^{k - 1}_s \wedge \omega^{n - k} \\
	&\geq (1 - s)^k \epsilon k \chi^{k - 1}_v \wedge \omega^{n - k} .
\end{aligned}
\end{equation}
Then
\begin{equation}
\begin{aligned}
	I 
	&\geq 
%	p \int^1_0 ds \int_M (1 - s)^k \epsilon k \varphi^{p - 1} \sqrt{- 1} \p (u - v) \wedge \bpartial (u - v) \wedge  \chi^{k - 1}_v \wedge \omega^{n - k} \\
%	&\qquad + \frac{1}{p + 1} \frac{d}{d t}\int^1_0 ds \int_M \varphi^{p + 1}  \chi^k_s \wedge \omega^{n - k} \\
%	&= 
	\frac{\epsilon k p}{k + 1} \int_M \varphi^{p - 1} \sqrt{- 1} \p (u - v) \wedge \bpartial (u - v) \wedge  \chi^{k - 1}_v \wedge \omega^{n - k} \\
	&\qquad + \frac{1}{p + 1} \frac{d}{d t}\int^1_0 ds \int_M \varphi^{p + 1}  \chi^k_s \wedge \omega^{n - k} .
\end{aligned}
\end{equation}

Integrating $I$ from $t_0 - 1$ to $t'\in [t_0 - 1, t_0]$, we obtain
\begin{equation}
\begin{aligned}
	C \int^{t'}_{t_0 - 1} dt \int_M \varphi^p \omega^n &\geq \sigma p \int^{t'}_{t_0 - 1} dt \int_M \varphi^{p - 1} \sqrt{- 1} \p (u - v) \wedge \bpartial (u - v) \wedge \omega^{n - 1} \\
	&\qquad + \frac{1}{p + 1} \int^1_0 ds \int_M \varphi^{p + 1}  \chi^k_s \wedge \omega^{n - k} \Bigg|^{t'}_{t = t_0 - 1} .
\end{aligned}
\end{equation}
Choosing $L = \inf_{M \times [0, t_0]} (u - v) - \epsilon t_0 + \epsilon$,
\begin{equation}
\label{uniform-moser-value-1}
	\varphi(x,t_0 - 1) %= (u - v - \epsilon (t_0 - 1) -  \inf_{M \times [0, t_0]} u + \epsilon t_0 - \epsilon )^- 
	= (u - v -  \inf_{M \times [0, t_0]} (u - v) )^- = 0 ,
\end{equation}
and
\begin{equation}
\label{uniform-moser-value-2}
	\sup_{M \times [0,t_0]} \varphi (x,t) = \sup_M \varphi(x,t_0) = (u(x_i, t_0) - v(x_i) - \epsilon  -  \inf_{M \times [0, t_0]} (u - v) )^- = \epsilon .
\end{equation}
So
\begin{equation}
\label{uniform-moser-inequality-1-1}
\begin{aligned}
	C \int^{t'}_{t_0 - 1} dt \int_M \varphi^p \omega^n &\geq \sigma p \int^{t'}_{t_0 - 1} dt \int_M \varphi^{p - 1} \sqrt{- 1} \p (u - v) \wedge \bpartial (u - v) \wedge \omega^{n - 1} \\
	&\qquad + \frac{1}{p + 1} \int^1_0 ds \int_M \varphi^{p + 1}  \chi^k_s \wedge \omega^{n - k} \Bigg|_{t = t'} .
\end{aligned}
\end{equation}
By integration by parts, we observe that
\begin{equation}
\label{uniform-moser-inequality-1-2}
\int^1_0 ds \int_M \varphi^{p + 1}  \chi^k_s \wedge \omega^{n - k} \geq \int^1_0 ds \int_M \varphi^{p + 1}  \chi^k_v \wedge \omega^{n - k} =  \int_M \varphi^{p + 1}  \chi^k_v \wedge \omega^{n - k} .
\end{equation}
Substituting \eqref{uniform-moser-inequality-1-1} into \eqref{uniform-moser-inequality-1-2}
\begin{equation}
\label{uniform-moser-inequality-1-3}
\begin{aligned}
	C \int^{t'}_{t_0 - 1} dt \int_M \varphi^p \omega^n &\geq \sigma p \int^{t'}_{t_0 - 1} dt \int_M \varphi^{p - 1} \sqrt{- 1} \p (u - v) \wedge \bpartial (u - v) \wedge \omega^{n - 1} \\
	&\qquad + \frac{1}{p + 1} \int_M \varphi^{p + 1}  \chi^k_v \wedge \omega^{n - k} \Bigg|_{t = t'} \\
%	&= \frac{4 \sigma p}{(p + 1)^2} \int^{t'}_{t_0 - 1} dt \int_M \sqrt{- 1} \p \varphi^{\frac{p + 1}{2}} \wedge \bpartial \varphi^{\frac{p + 1}{2}} \wedge \omega^{n - 1} \\
%	&\qquad +  \frac{1}{p + 1} \int_M \varphi^{p + 1}  \chi^k_v \wedge \omega^{n - k} \Bigg|_{t = t'} \\
	&\geq \frac{2 \sigma }{p + 1} \int^{t'}_{t_0 - 1} dt \int_M \sqrt{- 1} \p \varphi^{\frac{p + 1}{2}} \wedge \bpartial \varphi^{\frac{p + 1}{2}} \wedge \omega^{n - 1} \\
	&\qquad +  \frac{1}{p + 1} \int_M \varphi^{p + 1}  \chi^k_v \wedge \omega^{n - k} \Bigg|_{t = t'} .
\end{aligned}
\end{equation}
Consequently,
\begin{equation}
\label{uniform-moser-inequality-1-4}
\begin{aligned}
	C (p + 1) \int^{t_0}_{t_0 - 1} dt \int_M \varphi^p \omega^n &\geq 2 \sigma \int^{t_0}_{t_0 - 1} dt \int_M \sqrt{- 1} \p \varphi^{\frac{p + 1}{2}} \wedge \bpartial \varphi^{\frac{p + 1}{2}} \wedge \omega^{n - 1} \\
	&\qquad + \sup_{t \in [t_0 - 1, t_0]}\int_M \varphi^{p + 1}  \chi^k_v \wedge \omega^{n - k}  .
\end{aligned}
\end{equation}
By Sobolev inequality, for $\beta = \frac{n + 1}{n}$,
\begin{equation}
\label{uniform-moser-inequality-1-5}
	C (p + 1) \int^{t_0}_{t_0 - 1} dt \int_M \varphi^p \omega^n \geq \Bigg(\int^{t_0}_{t_0 - 1} dt \int_M \varphi^{(p + 1) \beta}  \Bigg)^{\frac{1}{\beta}}.
\end{equation}
%By H\"older's inequality,
%\begin{equation}
%	\int^{t_0}_{t_0 - 1} dt \int_M \varphi^p \omega^n \leq \Bigg(\int^{t_0}_{t_0 - 1} dt \int_M \varphi^{p + 1} \omega^n \Bigg)^{\frac{p}{p + 1}} \Bigg(\int^{t_0}_{t_0 - 1} dt \int_{\varphi > 0} \omega^n \Bigg)^{\frac{1}{p + 1}}
%\end{equation}
We can then iterate $\beta \rightarrow \beta^2 + \beta \rightarrow \beta^3 + \beta^2 + \beta \rightarrow \cdots$ and obtain
\begin{equation}
\label{uniform-moser-inequality-iteration}
	p_m = \frac{\beta (\beta^{m + 1} - 1)}{\beta - 1}
\end{equation}
and
\begin{equation}
\label{uniform-moser-inequality-1-6}
	(\ln C - \ln \beta) + \ln p_{m + 1} + p_m \ln ||\varphi ||_{L^{p_m}} \geq \frac{p_{m + 1}}{\beta} \ln ||\varphi||_{L^{p_{m + 1}}} .
\end{equation}
From \eqref{uniform-moser-inequality-1-6},
\begin{equation}
\label{uniform-moser-inequality-1-7}
	\sum^q_{m = 0}  \frac{\ln C - \ln \beta}{\beta^m} + \sum^q_{m = 0} \frac{\ln p_{m + 1}}{\beta^m} + \beta \ln ||\varphi ||_{L^{\beta}} \geq \frac{p_{q + 1}}{\beta^{q + 1}} \ln ||\varphi ||_{L^{p_{q + 1}}} ,
\end{equation}
that is
\begin{equation}
\label{uniform-moser-inequality-1-8}
%\begin{aligned}
	\ln \Big(\frac{C}{\beta - 1} \Big) \sum^q_{m = 0}  \frac{1}{\beta^m} + \sum^q_{m = 0} \frac{\ln (\beta^{m + 1} - 1)}{\beta^m} + \beta \ln ||\varphi||_{L^{\beta}} 
	\geq \frac{\beta^{q + 2} - 1}{\beta^q (\beta - 1)} \ln ||\varphi ||_{L^{p_{q + 1}}} .
%\end{aligned}
\end{equation}
%{\color{red} $\sum^q_{m = 0} \frac{\ln (\beta^{m + 1} - 1)}{\beta^m}$ is absolutely convergent\\}
Letting $q \rightarrow \infty$,
\begin{equation}
\label{uniform-moser-inequality-1-9}
	C +  \ln ||\varphi||_{L^{\beta}} \geq \frac{\beta}{\beta - 1} \ln ||\varphi||_{L^\infty} = \frac{\beta \ln \epsilon}{\beta - 1} .
\end{equation}
Therefore, there exists a uniform constant $c_1 > 0$ such that
\begin{equation}
\label{uniform-moser-inequality-1-10}
	\int^{t_0}_{t_0 - 1} dt \int_M \varphi^\beta \omega^n \geq c_1 .
\end{equation}
So we have
\begin{equation}
	\epsilon^\beta \int^{t_0}_{t_0 - 1} dt \int_{\{\varphi > 0\}} \omega^n \geq c_1 .
\end{equation}
When $\varphi > 0$,
\begin{equation}
	u < v + \epsilon t + \inf_{M \times [0,t_0]} (u - v) -\epsilon t_0 + \epsilon \leq v + \epsilon + \inf_{M \times [0,t_0]} (u - v) < \inf_{M \times [0,t_0]} (u - v)  .
\end{equation}
Thus, using an idea from Blocki~\cite{Blocki2005a} and Sz\'ekelyhidi~\cite{Szekelyhidi2014b},
%\begin{equation}
%	\epsilon^\beta \int^{t_0}_{t_0 - 1} dt \int_{\{u < \inf_{M \times [0,t_0]} (u - v)\}} \omega^n \geq c_1 .
%\end{equation}
\begin{equation}
\begin{aligned}
	c_1  
	&\leq \epsilon^\beta \int^{t_0}_{t_0 - 1} \frac{|| u^- (x,t)||_{L^1}}{|\inf_{M \times [0,t_0]} (u - v)|} dt \\
	&\leq \epsilon^\beta \int^{t_0}_{t_0 - 1} \frac{|| u(x,t) - \sup_M u(x,t)||_{L^1}}{|\inf_{M \times [0,t_0]} (u - v)|} dt .
\end{aligned}
\end{equation}
Since $\Delta u $ has a lower bound, we have a uniform bound for $||u(x,t) - \sup_M u(x,t)||_{L^1}$. 
As a consequence, there is a uniform constant $C_0 > 0$ such that
\begin{equation}
\inf_{M \times [0,t_0]} (u - v) > - C_0 . 
\end{equation}
However, it contradicts the definition of $t_0$.

\end{proof}

\bigskip

\noindent
{\bf Acknowledgements}\quad
The author is very grateful to Bo Guan for his advice and encouragement. The author also wishes to thank Hongjie Dong for helpful discussions.

\end{document}